\newtheorem{theorem}{Theorem}[section]
\newtheorem{lemma}{Lemma}[section]
\newtheorem{proposition}{Proposition}[section]
\theoremstyle{remark}
\newtheorem{remark}{Remark}[section]
\theoremstyle{remark}
\theoremstyle{definition}
\newtheorem{definition}{Definition}[section]
\numberwithin{equation}{section}
\begin{document}

\title{Warped Product Pointwise Bi-slant Submanifolds of Kaehler Manifolds}
\author{Bang-Yen Chen}
\address{Department of Mathematics, 
	Michigan State University, 619 Red Cedar Road,  East Lansing, Michigan 48824--1027, U.S.A.}
\email{chenb@msu.edu}

\author{Siraj Uddin}
\address{ Department of Mathematics, Faculty of Science, King Abdulaziz University, 21589 Jeddah, Saudi Arabia}
\email{siraj.ch@gmail.com}

\subjclass{Primary:  53C40;  Secondary  53C42, 53C15}

\keywords{slant submanifold, pointwise slant, pointwise bi-slant, warped product, Kaehler manifold}

\begin{abstract} Warped product manifolds have been studied for a long period of time. In contrast, the study of warped product submanifolds from extrinsic point of view was initiated by the first author around the beginning of this century in \cite{C3,C4}.  Since then the study of  warped product submanifolds has been investigated by many geometers.

The notion of slant submanifolds of almost Hermitian manifolds was introduced in \cite{C1}. Bi-slant submanifolds in almost contact metric manifolds were
defined  in \cite{Cab} by J. L. Cabrerizo et al.  In \cite{U3}, we studied  bi-slant submanifolds and warped product bi-slant submanifolds in Kaehler manifolds. 
 In this  article, we investigate warped product pointwise bi-slant submanifolds of  Kaehler manifolds. Our main results extend several important results on warped product slant submanifolds obtained in [7, 21--23, 27].

\end{abstract}
\maketitle

\section{Introduction}
\label{sec1}
The notion of slant submanifolds was introduced by B.-Y. Chen in \cite{C1} and the first results on slant submanifolds were collected in his book \cite{C2}. Since then this subject have been studied extensively by many geometers during the last two and half decades. Many interesting results on slant submanifolds have been obtained in [5, 6].
As an extension of slant submanifolds, F. Etayo \cite{Etayo} defined the notion of pointwise slant submanifolds under the name of quasi-slant submanifolds. In \cite{Etayo} he proved that a complete totally geodesic quasi-slant submanifold of a Kaehler manifold is a slant submanifold. In \cite{C7}, the first author and O. J. Garay  studied pointwise slant submanifolds and proved many interesting new results on such submanifolds. In particular, they provided a method  to construct pointwise slant  submanifolds of some Euclidean spaces. 
 
Warped product manifolds have been studied for a long period of time (cf. e.g., \cite{Bi,book17}). In contrast, the study of warped product submanifolds from extrinsic point of view was only initiated around the beginning of this century in \cite{C3,C4}.  Since then the study of  warped product submanifolds has been investigated by many geometers  (see, e.g., 
[1, 9, 10, 13, 16, 18--27] among many others.   For the most up-to-date overview of this subject, see \cite{book17}).
 
J. L. Cabrerizo et al. studied in  \cite{Cab} bi-slant submanifolds of almost contact metric manifolds.   In \cite{U3} the authors investigated bi-slant submanifolds in Kaehler manifolds.  The authors also studied   in \cite{U3} warped product bi-slant submanifolds. In particular, they proved that a warped product bi-slant submanifold in a Kaehler manifold is either a Riemannian product of two slant submanifolds or a warped product  submanifold $M_\theta\times_f M_\perp$ such that $M_\theta$ is a $\theta$-slant submanifold and $M_\perp$ is a totally real submanifold. The later one was known as a hemi-slant warped product submanifold, which have been studied by B. Sahin in \cite{S2}.

In this  article, we study warped product pointwise bi-slant submanifolds of a Kaehler manifold as a natural extension of bi-slant submanifolds. Our main results extend several important results on warped product slant submanifolds obtained in [7, 21--23, 27].

\section{Preliminaries}
\label{sec:2}

Let $(\tilde M,J, g)$ be an almost Hermitian manifold with almost
complex structure $J$ and a Riemannian metric $g$ such that
\begin{equation}
\label{2.1} J^2=-I,
\;\; g(JX, JY) = g(X, Y),\;\; X, Y\in {\mathfrak X}(\tilde M), \end{equation}
where $I$ denotes the identity map and ${\mathfrak X}(\tilde M)$ is the space consisting of  vector fields tangent to $\tilde M$.
Let $\tilde\nabla$ be the Levi-Civita connection on $\tilde M$. 
If the almost complex structure $J$ satisfies
\begin{equation}
\label{2.3}
(\tilde\nabla_XJ)Y=0,\;\; X, Y\in {\mathfrak X}(\tilde M),
\end{equation}
 then $\tilde M$ is called a {\it{Kaehler manifold}}.

Let $M$ be a Riemannian manifold isometrically immersed in $\tilde M$. Then $M$ is called a {\it  complex} submanifold if $J(T_xM)\subseteq T_xM$ holds for  $x\in M$, where $T_xM$ is the tangent space of $M$ at $x$. And $M$ is called {\it{totally real}} if $J(T_xM)\subseteq T^\perp_xM$ holds for  $x\in M$, where $T^\perp_xM$ denotes the normal space of $M$ at $x$. 

Besides complex and totally real submanifolds, there are several important classes of submanifolds defined by the behavior of tangent bundle of the submanifold under the action of the almost complex structure  of the ambient space.
For example, a submanifold $M$ is called a {\it{CR-submanifold}}  if there is a complex distribution ${\mathcal{D}}:p\to {\mathcal{D}}_p\subset T_pM$ whose orthogonal complementary distribution ${\mathcal{D}}^\perp:p\to{\mathcal{D}}^\perp_p\subset T_pM$ is totally real, i.e., $J({\mathcal{D}}^\perp_p)\subset T_p^\perp M$ (cf. \cite{B}).

For a unit vector $ X$ tangent to  a submanifold $M$ of $\tilde M$,  the  angle $\theta(X)$ between $JX$ and $T_pM$ is called the Wirtinger angle of $X$.
The submanifold $M$ is called a {\it {slant  submanifold}} if the Wirtinger angle  $\theta(X)$ is constant on $M$, i.e., the Wirtinger angle is independent  of the choice of $X\in T_pM$ and of $p\in M$   (cf.  \cite{C1,C2,book}). In this case, the constant angle $\theta$ is called the {\it{slant angle}} of the slant submanifold. A slant submanifold is called {\it proper} if its slant angle $\theta$ satisfying $\theta\ne 0,\frac{\pi}{2}$. Similar definitions apply to distributions.

A submanifold $M$ is called {\it{semi-slant}} if there is a pair of orthogonal distributions ${\mathcal{D}}$ and ${\mathcal{D}}^\theta$ such that ${\mathcal{D}}$ is complex and ${\mathcal{D}}^\theta$ is proper slant (cf.  \cite{P}).

A submanifold $M$ of  $\tilde M$ is called {\it{bi-slant}} if there exist two orthogonal distribution ${\mathcal{D}}_1$ and ${\mathcal{D}}_2$  on $M$ such that  $TM={\mathcal{D}}_1\oplus{\mathcal{D}}_2$ and ${\mathcal{D}}_1$ and $ {\mathcal{D}}_2$ are proper slant distributions satisfying $J{\mathcal{D}}_i\perp {\mathcal{D}}_j$ for $1\leq \,i\ne j\leq 2$ (cf.  \cite{U3}).

For a submanifold $M$ of a Riemannian manifold $\tilde M$, the formulas of Gauss and Weingarten are given respectively by
\begin{equation}
\label{2.4}
\tilde \nabla_X Y=\nabla_X Y+h(X,Y),
\end{equation}
\begin{equation}
\label{2.5}\tilde\nabla_XN=-A_NX+\nabla^\perp_XN
\end{equation}
for $X, Y\in TM$ and for normal vector field $N$ of $M$, where  $\nabla$ is the induced
Levi-Civita connection on $M$, $h$  the second fundamental form, $\nabla^\perp$  the
normal connection, and $A$  the shape operator. 
The shape operator and the second fundamental form of $M$ are related by 
\begin{equation}
\label{2.6}
g(A_NX,Y)= g(h(X,Y),N),
\end{equation}
where $g$ denotes the induced  metric on $M$ as well as the metric on $\tilde M$. 

For a tangent vector field $X$ and a normal vector field $N$ of $M$, we put
\begin{equation}
\label{2.7}JX=TX+FX,\;\;  JN=BN+CN,
\end{equation}
where $TX$ and $FX$ (respectively, $BN$ and $CN$) are the tangential and the normal components of $JX$ (respectively, of $JN$).

\begin{definition} A submanifold $M$ of an almost Hermitian manifold $\tilde M$ is called {\it{pointwise slant}} if, at each point $p\in M$, the Wirtinger angle $\theta(X)$ is independent of the choice of nonzero vector $X\in T^{*}_pM$, where $T^{*}_pM$ is the tangent space of nonzero vectors. In this case, $\theta$  is called {\it{slant function}} of $M$ (cf. \cite{C7}).
\end{definition}

The following is a simple characterization of pointwise slant submanifolds.

\begin{lemma}\label{L:1} {\rm \cite{C7}} Let $M$ be a submanifold of an almost Hermitian manifold $\tilde M$. Then $M$ is a pointwise slant submanifold if and only if 
\begin{equation}
\label{2.9}
T^2=-(\cos^2\theta) I
\end{equation}
for some real valued function $\theta$ defined on the tangent bundle $TM$ of $M$.
\end{lemma}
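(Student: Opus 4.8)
The plan is to reduce the statement to linear algebra on each tangent space $T_pM$, using the $(1,1)$-tensor $T$ from \eqref{2.7}. I would first record two preliminary facts. From $g(JX,JY)=g(X,Y)$ in \eqref{2.1} one sees that $J$ is skew-adjoint with respect to $g$, i.e. $g(JX,Y)=-g(X,JY)$; since by \eqref{2.7} the component $FX$ is normal while $TX,TY$ are tangent, this gives $g(TX,Y)=g(JX,Y)=-g(X,JY)=-g(X,TY)$ for $X,Y\in TM$, so $T$ is skew-symmetric on each $T_pM$. Hence $T^{2}$ is a self-adjoint operator on $T_pM$, and therefore diagonalizable with real eigenvalues, and $g(T^{2}X,X)=-g(TX,TX)=-|TX|^{2}$ for every $X$.

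Next I would relate the Wirtinger angle to $T$. For a unit vector $X\in T_pM$ one has $|JX|=|X|=1$ by \eqref{2.1}, and $TX$ is exactly the orthogonal projection of $JX$ onto $T_pM$ by \eqref{2.7}; hence $\cos\theta(X)=|TX|$, that is,
\[
\cos^{2}\theta(X)=g(TX,TX)=-\,g(T^{2}X,X)\quad\text{for every unit }X\in T_pM
\]
(for a general nonzero $X$ one divides the right-hand side by $|X|^{2}$). This identity is the bridge between the two conditions of the lemma.

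For the forward implication I would assume $M$ is pointwise slant, so that at each fixed $p$ the quantity $\cos^{2}\theta(X)$ equals a single number $c(p)$ for all unit $X\in T_pM$. By the bridge identity the self-adjoint operator $-T^{2}$ then has its quadratic form constantly equal to $c(p)$ on the unit sphere of $T_pM$, and a self-adjoint operator with this property must equal $c(p)\,I$ (evaluate it on an orthonormal eigenbasis: every eigenvalue is forced to be $c(p)$). Thus $T^{2}=-c(p)I$; setting $\theta(p)=\arccos\sqrt{c(p)}$ produces a function on $M$ — smooth since $c=-g(T^{2}X,X)$ for any local unit vector field $X$ — for which \eqref{2.9} holds. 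Conversely, if \eqref{2.9} holds for some function $\theta$, then substituting any unit $X\in T_pM$ into the bridge identity gives $\cos^{2}\theta(X)=-g(T^{2}X,X)=\cos^{2}\theta(p)$, independent of $X$, so the Wirtinger angle at $p$ is the same for all nonzero tangent vectors, which is exactly the definition of a pointwise slant submanifold with slant function $\theta$.

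The computations involved — the skew-symmetry of $T$, the identity $\cos\theta(X)=|TX|$, and the converse direction — are all routine. The only step that needs a moment's thought is the linear-algebra observation used in the forward implication, namely that a self-adjoint endomorphism whose quadratic form is constant on the unit sphere is necessarily a scalar multiple of the identity, together with the minor point of checking that the resulting $\theta$ is a well-defined smooth function on $M$. I do not anticipate any serious obstacle.
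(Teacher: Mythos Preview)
Your argument is correct and is essentially the standard proof of this characterization: the skew-symmetry of $T$, the identity $\cos^{2}\theta(X)=-g(T^{2}X,X)/|X|^{2}$, and the elementary fact that a self-adjoint operator with constant Rayleigh quotient is a scalar multiple of the identity are exactly the ingredients one expects. The paper, however, does not give its own proof of this lemma; it simply quotes the result from \cite{C7} (Chen--Garay), so there is no in-paper argument to compare against. Your write-up would serve perfectly well as the omitted proof; the only cosmetic point is that the lemma as stated speaks of a function $\theta$ ``on the tangent bundle $TM$'', whereas your construction naturally yields $\theta$ (equivalently $\cos^{2}\theta$) as a function on $M$ itself, which is in fact what is used throughout the rest of the paper.
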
 

Similarly, we can prove the following in a similar way as \cite{C7}.

\begin{proposition}\label{P:1} Let ${\mathcal{D}}$ be a distribution on a submanifold $M$. Then ${\mathcal{D}}$ is pointwise slant if and only if there is a constant $\lambda\in[-1, 0]$ such that $(PT)^2X=-\lambda X$, for any $X\in{\mathcal{D}}_p$ at  $p\in M$, where $P$ is the projection onto ${\mathcal{D}}$. Furthermore, in this case $\lambda=\cos^2\theta_{{\mathcal{D}}}$.
\end{proposition}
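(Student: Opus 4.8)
The plan is to mirror the proof of Lemma~\ref{L:1}, but to carry it out for a single distribution by replacing the tangential operator $T$ with the composition $PT$ restricted to $\mathcal D$. Two ingredients are needed: the linear algebra of the endomorphism $PT\colon\mathcal D_p\to\mathcal D_p$, and an explicit formula for the squared cosine of the angle between $JX$ and $\mathcal D_p$ in terms of $(PT)^2$. Once these are in hand, both implications drop out of the same identity.

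First I would record the linear algebra. For $X\in\mathcal D_p$ write $JX=TX+FX$ as in \eqref{2.7}; since $FX\in T_p^\perp M$ is orthogonal to $\mathcal D_p\subset T_pM$, the orthogonal projection of $JX$ onto $\mathcal D_p$ is exactly $PTX$, and in particular $|PTX|\le|JX|=|X|$ by \eqref{2.1}. Next, for $X,Y\in\mathcal D_p$ one has
\[
g(PTX,Y)=g(TX,Y)=g(JX,Y)=-g(X,JY)=-g(X,TY)=-g(X,PTY),
\]
where at each step a normal component or a $\mathcal D^{\perp}$-component is discarded because the other factor lies in $\mathcal D_p$, and $g(J\cdot,\cdot)=-g(\cdot,J\cdot)$ follows from \eqref{2.1}. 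Hence $PT$ is skew-symmetric on $\mathcal D_p$, so $(PT)^2$ is a $g$-symmetric, negative semi-definite endomorphism of $\mathcal D_p$ whose eigenvalues lie in $[-1,0]$.

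Next comes the angle computation. By definition $\theta_{\mathcal D}(X)$ is the angle between $JX$ and $\mathcal D_p$, so for unit $X\in\mathcal D_p$ the projection remark gives $\cos\theta_{\mathcal D}(X)=|PTX|$, i.e. $\cos^2\theta_{\mathcal D}(X)=g(PTX,PTX)$. Then, dropping components orthogonal to $\mathcal D_p$ and using the skew-adjointness of $J$,
\[
g(PTX,PTX)=g(TX,PTX)=g(JX,PTX)=-g(X,JPTX)=-g(X,PTPTX)=-g\big((PT)^2X,X\big),
\]
so that, after rescaling, for every nonzero $X\in\mathcal D_p$,
\[
\cos^2\theta_{\mathcal D}(X)=\frac{-\,g\big((PT)^2X,X\big)}{g(X,X)}.
\]
From this the equivalence is immediate. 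If $(PT)^2=-\lambda\,I$ on each $\mathcal D_p$ for a function $\lambda$, the right-hand side equals $\lambda$ for all nonzero $X\in\mathcal D_p$, so the angle is independent of $X$ and $\mathcal D$ is pointwise slant with $\cos^2\theta_{\mathcal D}=\lambda$; moreover the eigenvalue bound forces $\lambda\in[0,1]$. Conversely, if $\mathcal D$ is pointwise slant, then the right-hand side is the fixed number $\cos^2\theta_{\mathcal D}(p)$ for every nonzero $X\in\mathcal D_p$, i.e. the $g$-symmetric endomorphism $S:=(PT)^2+(\cos^2\theta_{\mathcal D})\,I$ has identically vanishing quadratic form on $\mathcal D_p$; hence $S=0$, which is the asserted identity with $\lambda=\cos^2\theta_{\mathcal D}$.

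The manipulations above are routine bookkeeping with the projections $P$, $T$, $F$ and the identities \eqref{2.1}; the one step that deserves a line of justification is the final one, namely that a $g$-symmetric endomorphism of a Euclidean space annihilated by its own quadratic form must vanish (diagonalize, or polarize $g(SX,X)\equiv 0$). I do not anticipate a genuine obstacle beyond keeping the projections straight.
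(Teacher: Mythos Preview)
Your proof is correct. The paper itself gives no proof of Proposition~\ref{P:1}; it simply remarks that the result ``can be proved in a similar way as \cite{C7}'' (i.e., by the same argument as for Lemma~\ref{L:1}), and your write-up carries out precisely that adaptation, replacing $T$ by $PT$ on $\mathcal D$ and using the skew-symmetry of $PT$ to recover the quadratic-form identity $\cos^2\theta_{\mathcal D}(X)=-g((PT)^2X,X)/g(X,X)$. Your observation that the eigenvalue bound forces $\lambda\in[0,1]$ also quietly corrects the typo in the stated range $[-1,0]$.
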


As easy consequences of the relation (\ref{2.9}), we find
\begin{equation}
\label{2.10}
g(TX,TY)=(\cos^2\theta) g(X,Y),
\;\; 
g(FX,FY)=(\sin^2\theta) g(X,Y).
\end{equation}
 Also, for a pointwise slant submanifold, (\ref{2.7}) and (\ref{2.9}) yield
\begin{equation}
\label{2.12}
BFX=-(\sin^2\theta) X\;\; {\mbox{and}}\;\; CFX=-FTX.
\end{equation}

\section{Pointwise Bi-slant Submanifolds}\label{sec3}

Now, we define pointwise bi-slant submanifolds. 
\begin{definition}
{\rm A submanifold $M$ of an almost Hermitian manifold $(\tilde M,J,g)$ is called  {\it{pointwise bi-slant}} if there exists a pair of orthogonal distributions ${\mathcal{D}}_1$ and ${\mathcal{D}}_2$  of $M$, at the point $p\in M$ such that
\vskip.05in

\noindent (a) $TM= {\mathcal{D}}_1\oplus{\mathcal{D}}_2$;

\noindent (b) $J{\mathcal{D}}_1 \perp {\mathcal{D}}_2$ and $J{\mathcal{D}}_2 \perp {\mathcal{D}}_1$;

\noindent (c) The distributions  ${\mathcal{D}}_1$, ${\mathcal{D}}_2$ are pointwise slant with slant function $\theta_1$, $\theta_2$, respectively.
\vskip.05in

\noindent The pair $\{\theta_1,\theta_2\}$ of slant functions is called the {\it bi-slant function}.
 A pointwise bi-slant submanifold $M$ is called {\it proper} if its bi-slant function satisfies $\theta_1,\theta_2\ne 0,\frac{\pi}{2}$ and both $\theta_1,\theta_2$ are not constant on $M$.}
\end{definition} 

Notice that \eqref{2.7} and condition (b) in Definition 3.1 imply that 
\begin{equation}\label{3.1} T({\mathcal{D}}_i)\subset {\mathcal{D}}_i,\;\; i=1,2.\end{equation}
Given a pointwise bi-slant submanifold,  for any $X\in TM$ we put
\begin{equation} \label{3.2}X=P_1X+P_2X
\end{equation}
where $P_i$ is the projection from $TM$ onto ${\mathcal{D}}_i$. Clearly, $P_iX$ is the components of $X$ in ${\mathcal{D}}_i$, $i=1,2$. In particular, if $X\in {\mathcal{D}}_i$, we have $X=P_iX$. 

If we put $T_i=P_i\circ T$, then we find from (\ref{3.2}) that
\begin{equation} \label{3.3}JX=T_1X+T_2X+FX
\end{equation}
for $X\in TM$. From Proposition \ref{P:1} we get
\begin{equation} \label{3.4}T_i^2X=-\big(\cos^2\theta_i\big)X,\;\; X\in TM,\;\; i=1,2.
\end{equation}

From now on, we assume the ambient manifold $\tilde M$ is Kaehlerian and  $M$ is pointwise bi-slant in $\tilde M$. 

We need the following lemma for later use.

\begin{lemma}\label{L:2} Let $M$ be a pointwise bi-slant submanifold of a Kaehler manifold $\tilde M$ with pointwise slant distributions ${\mathcal{D}}_1$ and ${\mathcal{D}}_2$ with distinct  slant functions $\theta_1$ and $\theta_2$, respectively. Then
\begin{enumerate}
\item[{\rm (i)}] For any $X, Y\in {\mathcal{D}}_1$ and $Z\in {\mathcal{D}}_2$, we have
\begin{align} \label{3.5}
\notag\left(\sin^2\theta_1-\sin^2\theta_2\right)\,g(\nabla_XY, Z)&=g(A_{FT_2Z}Y-A_{FZ}T_1Y, X)\\
&+g(A_{FT_1Y}Z-A_{FY}T_2Z, X).
\end{align}
\item[{\rm (ii)}] For $Z, W\in {\mathcal{D}}_2 $ and $X\in {\mathcal{D}}_1$, we have
\begin{align} \label{3.6}
\notag\left(\sin^2\theta_2-\sin^2\theta_1\right)\,g(\nabla_ZW, X)&=g(A_{FT_2W}X-A_{FW}T_1X, Z)\\
&+g(A_{FT_1X}W-A_{FX}T_2W, Z).
\end{align}
\end{enumerate}
\end{lemma}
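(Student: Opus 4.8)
The plan is to prove (i) by computing $g(\tilde\nabla_X Y, Z)$ in two ways and exploiting the Kaehler condition $(\tilde\nabla_X J)Y = 0$, which gives $\tilde\nabla_X JY = J\tilde\nabla_X Y$. First I would write, for $X,Y\in\mathcal D_1$ and $Z\in\mathcal D_2$,
\begin{equation*}
g(\nabla_X Y, Z) = g(\tilde\nabla_X Y, Z),
\end{equation*}
and then insert $J$ twice using \eqref{2.1}: $g(\tilde\nabla_X Y, Z) = g(J\tilde\nabla_X Y, JZ) = g(\tilde\nabla_X JY, JZ)$. Now decompose $JY = T_1 Y + FY$ (since $Y\in\mathcal D_1$, its tangential part lies in $\mathcal D_1$ by \eqref{3.1}) and $JZ = T_2 Z + FZ$. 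Expanding gives four terms: $g(\tilde\nabla_X T_1 Y, T_2 Z)$, $g(\tilde\nabla_X T_1 Y, FZ)$, $g(\tilde\nabla_X FY, T_2 Z)$, and $g(\tilde\nabla_X FY, FZ)$. The terms with a normal second slot are handled by the Weingarten formula \eqref{2.5}, producing shape-operator terms $-g(A_{FZ}X, T_1 Y) = -g(A_{FZ}T_1 Y, X)$ and $-g(A_{FY}X, T_2 Z) = -g(A_{FY}T_2 Z, X)$ (using symmetry of $A$ via \eqref{2.6}); these already match two of the four terms on the right-hand side of \eqref{3.5}.

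Next I would apply the same $J$-insertion trick to the remaining two terms. For $g(\tilde\nabla_X T_1 Y, T_2 Z)$, since $T_1 Y\in\mathcal D_1$ and $T_2 Z\in\mathcal D_2$ I can iterate the argument: $g(\tilde\nabla_X T_1 Y, T_2 Z) = g(\tilde\nabla_X JT_1 Y, JT_2 Z)$, and using \eqref{3.4}, $J T_1 Y = T_1^2 Y + FT_1 Y = -(\cos^2\theta_1)Y + FT_1 Y$ and similarly $JT_2 Z = -(\cos^2\theta_2)Z + FT_2 Z$. Expanding and again disposing of the normal slots via Weingarten yields $(\cos^2\theta_1)(\cos^2\theta_2)\,g(\tilde\nabla_X Y, Z)$ plus shape-operator terms $g(A_{FT_2 Z}T_1 Y, X)$ and $g(A_{FT_1 Y}T_2 Z, X)$ plus a term $g(\tilde\nabla_X FT_1 Y, FT_2 Z)$ which requires care. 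The term $g(\tilde\nabla_X FY, FZ)$ is treated by writing $FY = JY - T_1 Y$, so $g(\tilde\nabla_X FY, FZ) = g(\tilde\nabla_X JY, FZ) - g(\tilde\nabla_X T_1 Y, FZ)$; the first is $g(J\tilde\nabla_X Y, FZ) = -g(\tilde\nabla_X Y, JFZ) = -g(\tilde\nabla_X Y, BFZ + CFZ)$, and by \eqref{2.12} $BFZ = -(\sin^2\theta_2)Z$, $CFZ = -FT_2 Z$, giving a $(\sin^2\theta_2)\,g(\nabla_X Y, Z)$ contribution plus $g(\tilde\nabla_X Y, FT_2 Z) = -g(A_{FT_2 Z}X, Y)$.

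Collecting all contributions, the coefficient of $g(\nabla_X Y, Z)$ on the left accumulates to $1 - (\cos^2\theta_1)(\cos^2\theta_2) - \sin^2\theta_2 - (\text{a }\cos^2\theta_1\sin^2\theta_2\text{ term from the symmetric reduction})$, and a short trigonometric simplification — the key being $1-\cos^2\theta_1\cos^2\theta_2-\sin^2\theta_1\cos^2\theta_2 = \sin^2\theta_1$ type identities — collapses everything to $\sin^2\theta_1 - \sin^2\theta_2$, while the surviving shape-operator terms are exactly the four on the right of \eqref{3.5}. The main obstacle I anticipate is \emph{bookkeeping}: several terms of the form $g(\tilde\nabla_X F(\cdot), F(\cdot))$ appear at intermediate stages and must be repeatedly rewritten via $F(\cdot) = J(\cdot) - T(\cdot)$ and the identities \eqref{2.10}–\eqref{2.12}, and one must be scrupulous about which terms are symmetric under $X\leftrightarrow$ the other vector (to apply \eqref{2.6}) and about the signs coming from \eqref{2.1} when moving $J$ across $g$. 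Part (ii) then follows by an entirely parallel computation with the roles of $\mathcal D_1$ and $\mathcal D_2$ (equivalently $\theta_1$ and $\theta_2$) interchanged, which accounts for the sign flip $\sin^2\theta_2 - \sin^2\theta_1$; I would simply remark that (ii) is proved mutatis mutandis rather than repeat the argument.
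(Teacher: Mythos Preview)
Your strategy is the same as the paper's --- start from $g(\nabla_XY,Z)=g(\tilde\nabla_XJY,JZ)$, expand via \eqref{2.7}, and convert normal pieces into shape-operator terms using Gauss--Weingarten together with \eqref{2.12} and \eqref{3.4} --- but your execution is more laborious than it needs to be. The paper does \emph{not} decompose $JZ$ at the outset: it keeps $g(\tilde\nabla_XT_1Y,JZ)$ intact and pulls $J$ to the left, $g(\tilde\nabla_XT_1Y,JZ)=-g(\tilde\nabla_XJT_1Y,Z)=-g(\tilde\nabla_XT_1^2Y,Z)-g(\tilde\nabla_XFT_1Y,Z)$, so that \eqref{3.4} immediately produces a clean $\cos^2\theta_1\,g(\nabla_XY,Z)$ on one side with no products $\cos^2\theta_1\cos^2\theta_2$. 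Likewise, for the $g(\tilde\nabla_XFY,FZ)$ term the paper moves the derivative onto $FZ$ and writes $JFZ=BFZ+CFZ$, so \eqref{2.12} yields $\sin^2\theta_2\,g(\nabla_XY,Z)$ directly. This asymmetric organisation gives exactly the four shape-operator terms of \eqref{3.5} and the coefficient $\sin^2\theta_1-\sin^2\theta_2$ without the intermediate cross terms $g(A_{FT_2Z}T_1Y,X)$, $g(\tilde\nabla_XFT_1Y,FT_2Z)$ that your symmetric expansion generates and then has to eliminate. One small slip to watch: $g(\tilde\nabla_XT_1Y,FZ)$ is a Gauss term, $g(h(X,T_1Y),FZ)=+g(A_{FZ}T_1Y,X)$, not a Weingarten term with a minus sign. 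Part (ii) is indeed obtained in the paper by the analogous computation with the roles of $\mathcal D_1$ and $\mathcal D_2$ interchanged, exactly as you propose.
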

\begin{proof}  For  $X, Y\in {\mathcal{D}}_1$ and $Z\in {\mathcal{D}}_2$, we have
\begin{align}
\notag g(\nabla_XY, Z)=g(\tilde\nabla_XY, Z)=g(\tilde\nabla_XJY, JZ).
\end{align}
From (\ref{2.7}) we derive
\begin{align}
g(\nabla_XY, Z)=&\,g(\tilde\nabla_XT_1Y, JZ)+g(\tilde\nabla_XFY, T_2Z)+g(\tilde\nabla_XFY, FZ)\notag\\
=&-g(\tilde\nabla_XT^2_1Y, Z)-g(\tilde\nabla_XFT_1Y, Z)-g(A_{FY}X, T_2Z)\notag\\
&-g(\tilde\nabla_XFZ, FY).\notag
\end{align}
Again, using (\ref{2.7}) and (\ref{3.4}), we arrive at
\begin{align}
g(\nabla_XY, Z)=&\cos^2\theta_1\,g(\tilde\nabla_XY, Z)-\sin2\theta_1X(\theta_1)\,g(Y, Z)+g(A_{FT_1Y}X, Z)\notag\\
&-g(A_{FY}X, T_2Z)-g(\tilde\nabla_XFZ, JY)+g(\tilde\nabla_XFZ, T_1Y).\notag
\end{align}
By the orthogonality of two distributions and the symmetry of the shape operator, the above equation takes the from
\begin{align}
\sin^2\theta_1\,g(\nabla_XY, Z)&=g(A_{FT_1Y}Z-A_{FY}T_2Z, X)+g(\tilde\nabla_XBFZ, Y)\notag\\
&+g(\tilde\nabla_XCFZ, Y)-g(A_{FZ}X, T_1Y).\notag
\end{align}
Then we find from (\ref{2.12}) that
\begin{align}
\sin^2\theta_1\,g(\nabla_XY, Z)&=g(A_{FT_1Y}Z-A_{FY}T_2Z, X)-\sin^2\theta_2\,g(\tilde\nabla_XZ, Y)\notag\\
&\hskip-.3in -\sin2\theta_2X(\theta_2)\,g(Y, Z)-g(\tilde\nabla_XFT_2Z, Y)-g(A_{FZ}T_1Y, X).\notag
\end{align}
Using (\ref{2.5}) and the orthogonality of vector fields, we get
\begin{align}
\sin^2\theta_1\,g(\nabla_XY, Z)&=g(A_{FT_1Y}Z-A_{FY}T_2Z, X)+\sin^2\theta_2\,g(\tilde\nabla_XY, Z)\notag\\
&+g(A_{FT_2Z}X, Y)-g(A_{FZ}T_1Y, X).\notag
\end{align}
Now, part (i) of the lemma follows from above relation by using the symmetry of the shape operator. In a similar way, we can prove (ii).
\end{proof}

\section{Warped Product Pointwise Bi-slant Submanifolds}\label{sec4}

Let $B$ and $F$ be two Riemannian manifolds with metrics $g_B$ and $g_F$, respectively, and $f$ a smooth function on $B$. Consider the product manifold $B\times F$ with projections $\pi_1:B\times F\to B$ and $\pi_2:B\times F\to F$. The warped product $M=B\times_fF$ is the manifold equipped with the Riemannian metric given by
\begin{equation*}
g(X, Y)=g_B({\pi_1}_\star X, {\pi_1}_\star Y)+(f\circ\pi_1)^2g_F({\pi_2}_\star X, {\pi_2}_\star Y)
\end{equation*}
for  $X, Y\in {\mathfrak  X}(M)$, where $\star$ denotes the tangential maps. 

A warped product $M_1\times _fM_2$ is called {\it{trivial}} (or simply called a {\it{Riemannian product}}) if the warping function $f$ is constant. Let $X$ be a vector field tangent to $M_1$ and $Z$  a  vector field tangent to $M_2$, then Lemma 7.3 of \cite{Bi} gives
\begin{equation}
\label{4.1}
\nabla_XZ=\nabla_ZX=X(\ln f)Z
\end{equation}
where $\nabla$ is the Levi-Civita connection on $M$. 

 For a warped product $M=M_1\times_fM_2$, the base manifold $M_1$ is totally geodesic in $M$ and the fiber $M_2$ is totally umbilical in $M$ (see \cite{Bi,C3}). 

In this section, we study warped product pointwise bi-slant submanifolds in a Kaehler manifold $\tilde M$.

First, we give the following lemmas for later use.

\begin{lemma}\label{L:3} Let $M_1\times_fM_2$ be a warped product pointwise bi-slant submanifold of a Kaehler manifold $\tilde M$ such that $M_1$ and $M_2$ are pointwise slant submanifolds with slant functions $\theta_1$ and $\theta_2$, respectively of $\tilde M$. Then
\begin{align}
\label{4.2}
g(h(X, W), FT_2Z)-g(h(X, T_2Z), FW)=(\sin2\theta_2)X(\theta_2)\,g(Z, W)
\end{align}
for any $X\in TM_1$ and $Z, W\in TM_2$.
\end{lemma}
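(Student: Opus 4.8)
The plan is to compute $g(h(X,W),FT_2Z)$ by converting it, via the Kaehler condition and the Gauss--Weingarten formulas, into terms involving $\nabla$ on $M$ and then exploit the warped product structure through \eqref{4.1}. First I would start from $g(\tilde\nabla_Z X, W)$-type expressions. Concretely, write $g(h(X,W),FT_2Z)=g(\tilde\nabla_X W, FT_2Z)$ and use $J$-invariance of $g$ together with $(\tilde\nabla J)=0$ to transfer $J$ onto the other slot: $g(\tilde\nabla_X W, FT_2Z)=g(\tilde\nabla_X W, JT_2Z)-g(\tilde\nabla_X W, T_2^2 Z)$, since $JT_2Z = FT_2Z + T_2^2 Z + (\text{a }\mathcal D_1\text{-component which is zero because }T_2Z\in\mathcal D_2\text{ so }T(T_2Z)\in\mathcal D_2)$; then replace $T_2^2Z=-(\cos^2\theta_2)Z$ from \eqref{3.4}. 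The term $g(\tilde\nabla_X W, JT_2Z)=-g(J\tilde\nabla_X W, T_2Z)=-g(\tilde\nabla_X JW, T_2Z)$, and now expand $JW=T_2W+FW$ again.

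The second step is to handle the piece $g(h(X,W),FT_2Z)$ versus $g(h(X,T_2Z),FW)$ symmetrically: after the manipulations above, one side produces a term $g(\tilde\nabla_X FW, T_2Z)=-g(A_{FW}X,T_2Z)=-g(h(X,T_2Z),FW)$, which is exactly the second term on the left of \eqref{4.2}, so subtracting it cancels that contribution and isolates the remaining pieces. The surviving terms will be of the form $(\cos^2\theta_2)\,g(\tilde\nabla_X W, Z)$ plus a term where the derivative $X(\theta_2)$ falls on $\cos^2\theta_2$ when differentiating $T_2^2$; that derivative term yields $-\sin 2\theta_2\,X(\theta_2)\,g(W,Z)$ up to sign (this is where $\sin 2\theta_2 = 2\sin\theta_2\cos\theta_2$ enters, exactly as in the computations in the proof of Lemma~\ref{L:2}).

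The third step is to kill the leftover $(\cos^2\theta_2)\,g(\tilde\nabla_X W, Z)$ term. Here is where the warped product hypothesis is essential: by \eqref{4.1}, $\nabla_X W = X(\ln f) W$ for $X\in TM_1$ and $W\in TM_2$, and $g(h(X,W),Z)=0$ since $h(X,W)$ is normal while $Z$ is tangent — hence $g(\tilde\nabla_X W, Z)=X(\ln f)\,g(W,Z)$. The analogous computation starting from $g(h(X,T_2Z),FW)$ produces a matching $(\cos^2\theta_2)\,X(\ln f)\,g(T_2Z, \cdot)$-type term that cancels against it by symmetry of $g(\tilde\nabla_X\cdot,\cdot)$ in the two tangent-to-$M_2$ slots (using that $g(T_2Z,W)$ is symmetric in $Z,W$ up to the $T_2$, which by \eqref{2.10} contributes only $\cos^2\theta_2\,g(Z,W)$-proportional pieces). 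Collecting everything, the $\nabla$-terms and the warping terms drop out and one is left precisely with $(\sin 2\theta_2)X(\theta_2)\,g(Z,W)$ on the right.

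The main obstacle I anticipate is bookkeeping: there are two nearly parallel computations (one for each term on the left of \eqref{4.2}) that must be carried out in lockstep so that the $\nabla_X W$, the warping-function, and the $A_{FW}X$ cross-terms cancel cleanly; a sign error in transferring $J$ across $g$ or in differentiating $T_2^2=-(\cos^2\theta_2)I$ would spoil the cancellation. The conceptual content, however, is entirely routine given Lemma~\ref{L:1}, \eqref{3.4}, \eqref{2.10}, \eqref{2.12}, \eqref{4.1} and the Kaehler identity \eqref{2.3}, exactly in the style of the proof of Lemma~\ref{L:2} above; no genuinely new idea is needed beyond inserting \eqref{4.1} at the right moment.
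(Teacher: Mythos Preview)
There is a genuine gap. Following the route you describe, after writing $FT_2Z = JT_2Z - T_2^2Z$, moving $J$ across, and expanding $JW=T_2W+FW$, one obtains exactly
\[
g(h(X,W),FT_2Z) - g(h(X,T_2Z),FW) \;=\; -\,g(\tilde\nabla_X T_2W,\, T_2Z) \;+\; \cos^2\theta_2\, g(\tilde\nabla_X W,\, Z).
\]
The point is that $\cos^2\theta_2$ entered here only as a pointwise coefficient, coming from $T_2^2Z=-(\cos^2\theta_2)Z$ sitting in the \emph{undifferentiated} slot of $g$; no derivative of $\theta_2$ has been taken anywhere, contrary to your claim. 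If you now invoke \eqref{4.1} on both surviving pieces --- in particular treating $g(\tilde\nabla_X T_2W,T_2Z)$ as $X(\ln f)\,g(T_2W,T_2Z)=\cos^2\theta_2\,X(\ln f)\,g(W,Z)$, which is what your ``third step'' amounts to --- the two terms cancel and you get zero, not \eqref{4.2}. Trying instead to pull one $T_2$ across via skewness introduces $(\nabla_X T_2)$, over which you have no independent control; carrying both of your ``parallel computations'' through honestly yields only the symmetrization of \eqref{4.2} in $Z,W$. In short, your route never differentiates a $\theta_2$-dependent quantity, so $X(\theta_2)$ cannot emerge from it.

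The paper's argument is organized differently, and the difference is precisely the missing idea. It starts from $g(\tilde\nabla_X Z,W)=g(\tilde\nabla_X JZ,JW)$, splits $JZ=T_2Z+FZ$, and handles the $FZ$-piece by writing $g(\tilde\nabla_X FZ,JW)=-g(\tilde\nabla_X JFZ,W)$ and then invoking \eqref{2.12}: $JFZ = BFZ + CFZ = -(\sin^2\theta_2)\,Z - FT_2Z$. Now the function $\sin^2\theta_2$ sits \emph{inside} the covariant derivative, and $\tilde\nabla_X\big((\sin^2\theta_2)\,Z\big)$ is what yields $\sin 2\theta_2\,X(\theta_2)\,g(Z,W)$; the $CFZ=-FT_2Z$ part then produces the $g(h(X,W),FT_2Z)$ term on the left of \eqref{4.2}. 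You list \eqref{2.12} among your tools but never actually deploy it; in the paper it is the decisive step, not \eqref{3.4}.
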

\begin{proof} For any $X\in TM_1$ and $Z, W\in TM_2$, we have
\begin{align}
\label{4.3}
g(\tilde\nabla_XZ, W)=g(\nabla_XZ, W)=X(\ln f)\,g(Z, W).
\end{align}
On the other hand, we also have
\begin{align*}
g(\tilde\nabla_XZ, W)=g(J\tilde\nabla_XZ, JW)=g(\tilde\nabla_XJZ, JW)
\end{align*}
for any $X\in TM_1$ and $Z, W\in TM_2$. Using (\ref{2.7}), we obtain
\begin{align*}
g(\tilde\nabla_XZ, W)=g(\tilde\nabla_XT_2Z, T_2W)+g(\tilde\nabla_XT_2Z, FW)+g(\tilde\nabla_XFZ, JW).
\end{align*}
Then from (\ref{2.1}), (\ref{2.3}),  (\ref{2.4}) and (\ref{4.1}), we derive
\begin{align*}
g(\tilde\nabla_XZ, W)&=\cos^2\theta_2\,X(\ln f)\,g(Z, W)+g(h(X, T_2Z), FW)-g(\tilde\nabla_XJFZ, W)\\
&=\cos^2\theta_2\,X(\ln f)\,g(Z, W)+g(h(X, T_2Z), FW)-g(\tilde\nabla_XBFZ, W)\\
&\hskip.2in -g(\tilde\nabla_XCFZ, W).
\end{align*}
Using (\ref{2.12}), we find
\begin{align}
\label{4.4}
&g(\tilde\nabla_XZ, W)=\cos^2\theta_2\,X(\ln f)\,g(Z, W)+g(h(X, T_2Z), FW)\notag\\
&\hskip.2in+\sin^2\theta_2\,g(\tilde\nabla_XZ, W)+\sin2\theta_2\,X(\theta_2)\,g(Z, W)+g(\tilde\nabla_XFT_2Z, W).
\end{align}
Thus the lemma follows from (\ref{4.3}) and (\ref{4.4}) by using  (\ref{2.5}) and (\ref{4.1}).\end{proof}

\begin{lemma}\label{L:4} Let $M_1\times_fM_2$ be a warped product pointwise bi-slant submanifold of a Kaehler manifold $\tilde M$ such that $M_1$ and $M_2$ are pointwise slant submanifolds with slant functions $\theta_1$ and $\theta_2$, respectively of $\tilde M$. Then
\begin{align}
\label{4.5}
g(h(X, Z), FW)-g(h(X, W), FZ)=2(\tan\theta_2)X(\theta_2)\,g(T_2Z, W)
\end{align}
for any $X\in TM_1$ and $Z, W\in TM_2$.
\end{lemma}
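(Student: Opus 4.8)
The plan is to obtain \eqref{4.5} as a direct algebraic consequence of Lemma \ref{L:3} by substituting $T_2Z$ for $Z$ in \eqref{4.2} and then invoking the identity $T_2^2=-(\cos^2\theta_2)I$ from \eqref{3.4}. First I would check that the substitution is legitimate: by \eqref{3.1} the endomorphism $T$ preserves each slant distribution, so in the present warped product situation $T_2Z=TZ$ is again tangent to $M_2$ whenever $Z$ is, and hence $T_2Z$ may be inserted in place of $Z$ in \eqref{4.2}. This yields
\[ g(h(X,W),FT_2^2Z)-g(h(X,T_2^2Z),FW)=(\sin2\theta_2)\,X(\theta_2)\,g(T_2Z,W) \]
for all $X\in TM_1$ and $Z,W\in TM_2$.

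Next I would use \eqref{3.4}, i.e. $T_2^2Z=-(\cos^2\theta_2)Z$, together with the fact that both $h$ and $F$ are $C^\infty(M)$-linear in the slot that has been modified; this lets the function $\cos^2\theta_2$ be pulled outside each term on the left, so the left-hand side becomes $(\cos^2\theta_2)\bigl(g(h(X,Z),FW)-g(h(X,W),FZ)\bigr)$. Hence
\[ (\cos^2\theta_2)\bigl(g(h(X,Z),FW)-g(h(X,W),FZ)\bigr)=(\sin2\theta_2)\,X(\theta_2)\,g(T_2Z,W). \]
Writing $\sin2\theta_2=2\sin\theta_2\cos\theta_2$ and dividing by $\cos^2\theta_2$ gives exactly \eqref{4.5}.

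I do not anticipate any substantive obstacle here: all of the analytic work is already contained in Lemma \ref{L:3}, and Lemma \ref{L:4} is merely its ``symmetrization'' against the endomorphism $T_2$. The one point that deserves a word of care is the division by $\cos^2\theta_2$ in the last step: the displayed identity above holds at every point of $M$, and one divides only where $\cos\theta_2\neq0$; at points where $\theta_2=\tfrac{\pi}{2}$ the coefficient $\tan\theta_2$ appearing in \eqref{4.5} is read in the evident limiting sense, which is consistent with the way the formula is subsequently applied.
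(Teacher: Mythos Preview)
Your argument is correct and is exactly the approach taken in the paper: the authors' proof is the single sentence ``By interchanging $Z$ by $T_2Z$ in \eqref{4.2} for any $Z\in TM_2$ and by using \eqref{3.4}, we obtain the required result,'' which is precisely the substitution-and-simplification you carried out in detail. Your added remarks (that $T_2Z\in TM_2$ by \eqref{3.1}, and the care about dividing by $\cos^2\theta_2$) are fine elaborations of what the paper leaves implicit.
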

\begin{proof} By interchanging $Z$ by $T_2 Z$ in (\ref{4.2}) for any $Z\in TM_2$ and by using (\ref{3.4}), we obtain the required result.
\end{proof}

\begin{lemma}\label{L:5} Let $M_1\times_fM_2$ be a warped product pointwise bi-slant submanifold of a Kaehler manifold $\tilde M$ such that $M_1$ and $M_2$ are pointwise slant submanifolds with slant functions $\theta_1$ and $\theta_2$, respectively of $\tilde M$. Then
\begin{align}
\label{4.6}
g(h(X, W), FT_2Z)-g(h(X, T_2Z), FW)=(\cos^2\theta_2)X(\ln f)\,g(Z, W)
\end{align}
for any $X\in TM_1 $ and $Z, W\in TM_2$.
\end{lemma}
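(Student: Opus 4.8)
The plan is to argue in the same spirit as Lemma~\ref{L:3}. One starts, as in \eqref{4.3}, from the two expressions $g(\tilde\nabla_X Z, W) = g(\nabla_X Z, W) = X(\ln f)\,g(Z,W)$ (which uses only the warped-product formula \eqref{4.1}) and, on the other hand, $g(\tilde\nabla_X Z, W) = g(\tilde\nabla_X JZ, JW)$ (from \eqref{2.1} and the Kaehler condition \eqref{2.3}). First I would expand \emph{both} $JZ = T_2Z + FZ$ and $JW = T_2W + FW$ by \eqref{2.7} --- rather than keeping one of them intact as was done in the proof of Lemma~\ref{L:3} --- and then evaluate each of the four resulting terms with the Gauss and Weingarten formulas \eqref{2.4}--\eqref{2.6}, the relation \eqref{2.10}, and the warped-product formula \eqref{4.1}.

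The term $(\cos^2\theta_2)X(\ln f)\,g(Z,W)$ on the right of \eqref{4.6} arises by precisely the same mechanism as the corresponding term in \eqref{4.4}: from $g(\tilde\nabla_X T_2Z, T_2W) = g(\nabla_X T_2Z, T_2W) = X(\ln f)\,g(T_2Z, T_2W) = (\cos^2\theta_2)X(\ln f)\,g(Z,W)$, using \eqref{4.1} on $T_2Z \in TM_2$ followed by \eqref{2.10}. The essential difference from the proof of Lemma~\ref{L:3} is that here one must avoid introducing any derivative of $\theta_2$: the term $g(\tilde\nabla_X FZ, \cdot)$ should \emph{not} be rewritten as $-g(\tilde\nabla_X JFZ, \cdot)$ and then fed into \eqref{2.12} --- that is exactly the step which produces $X(\sin^2\theta_2) = (\sin 2\theta_2)X(\theta_2)$ in \eqref{4.4}. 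Instead I would handle it through the Weingarten formula \eqref{2.5}--\eqref{2.6} together with the identity $\nabla^\perp_X FZ = F\nabla_X Z + C\,h(X,Z) - h(X, T_2Z)$, a direct consequence of $(\tilde\nabla_X J)Z = 0$, and produce the normal field $FT_2Z$ purely algebraically from $JT_2Z = T_2^2Z + FT_2Z = -(\cos^2\theta_2)Z + FT_2Z$ (Proposition~\ref{P:1} and \eqref{3.4}). Comparing the resulting identity with \eqref{4.3} and using \eqref{2.5}--\eqref{2.6} once more to identify $g(\tilde\nabla_X FT_2Z, W)$ with $-g(h(X,W), FT_2Z)$ then gives \eqref{4.6}.

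The main obstacle is entirely the bookkeeping. One must keep careful track, at each use of \eqref{2.4} or \eqref{2.5}, of which component is tangential and which is normal, and above all of signs: the quantity $(\cos^2\theta_2)X(\ln f)\,g(Z,W)$ can be generated through more than one channel --- via $g(T_2Z, T_2W) = (\cos^2\theta_2)g(Z,W)$, via $g(FZ, FW) = (\sin^2\theta_2)g(Z,W)$, and via $BFW = -(\sin^2\theta_2)W$ together with $\cos^2\theta_2 + \sin^2\theta_2 = 1$ --- so one must check that these contributions combine to leave exactly one copy on the right-hand side. A useful consistency check is that \eqref{4.6} and \eqref{4.2} share the same left-hand side, so the two lemmas together yield the relation $(\sin 2\theta_2)\,X(\theta_2) = (\cos^2\theta_2)\,X(\ln f)$ along $M_1$, a relation one expects to exploit in the sequel.
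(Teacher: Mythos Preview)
Your plan has a genuine gap. The step ``$g(\nabla_X T_2Z, T_2W) = X(\ln f)\,g(T_2Z,T_2W)$ via \eqref{4.1}'' is not justified: relation \eqref{4.1} applies to lifts from the factors, and $T_2Z$ is in general \emph{not} a lift from $M_2$, since the endomorphism $T_2$ (which depends on $J$ and on the immersion) varies along $M_1$. In fact, if you run your four--term expansion exactly as written---using \eqref{4.1} naively on $T_2Z$ for the $(T_2Z,T_2W)$-term and your identity $\nabla^\perp_X FZ = F\nabla_X Z + Ch(X,Z) - h(X,T_2Z)$ for the $(FZ,FW)$-term---then the $\cos^2\theta_2$ and $\sin^2\theta_2$ contributions recombine to $X(\ln f)\,g(Z,W)$ and cancel the left side of \eqref{4.3}, while all the second--fundamental--form terms collapse to the single relation $g(h(X,W),FT_2Z) = g(h(X,T_2Z),FW)$. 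That says the left side of \eqref{4.6} vanishes identically, which is false whenever $\cos\theta_2\neq 0$ and $f$ is non-constant. The term you have dropped is exactly the covariant derivative $(\nabla_X T_2)Z$; it carries the information distinguishing $X(\ln f)$ from $X(\theta_2)$ and cannot be suppressed while differentiating in the $X$-direction.

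The paper's argument is structurally different: it differentiates along the fiber, writing $g(h(X,Z),FW)=g(\tilde\nabla_Z X, FW)$ and expanding $JX=T_1X+FX$, so that it is the \emph{base} structure $T_1$ that enters. The resulting relation \eqref{4.7} uses only $g(\nabla_Z T_1X, W) = (T_1X)(\ln f)\,g(Z,W)$, which is legitimate because $T_1X$ takes values in $TM_1$ and any non-lift correction therefore lies in $TM_1$, orthogonal to $W\in TM_2$. One then polarizes \eqref{4.7} in $Z$ and $W$, subtracts to obtain \eqref{4.9}, and the final substitution $Z\mapsto T_2Z$ together with \eqref{3.4} is purely algebraic (tensorial), yielding \eqref{4.6}. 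No derivative of $\theta_2$ appears because $\tilde\nabla$ is never applied to a field built from $T_2$.
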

\begin{proof} For any $X\in TM_1$ and $Z, W\in TM_2$, we have
\begin{align*}
g(h(X, Z), FW)=g(\tilde\nabla_ZX, FW)=g(\tilde\nabla_ZX, JW)-g(\tilde\nabla_ZX, T_2W).
\end{align*}
Using (\ref{2.1}), (\ref{2.3}), (\ref{2.7}) and (\ref{4.1}), we get
\begin{align*}
g(h(X, Z), FW)=-g(\tilde\nabla_ZT_1X, W)-g(\tilde\nabla_ZFX, W)-X(\ln f)\,g(Z, T_2W).
\end{align*}
Again from (\ref{2.1}), (\ref{4.1}) and (\ref{2.5})-(\ref{2.6}), we arrive at
\begin{align}
\label{4.7}
g(h(X, Z), FW)&=-T_1X(\ln f)\,g(Z, W)+g(h(Z, W), FX)\notag\\
&+X(\ln f)\,g(T_2Z, W).
\end{align}
Then from polarization, we derive
\begin{align}
\label{4.8}
g(h(X, W), FZ)&=-T_1X(\ln f)\,g(Z, W)+g(h(Z, W), FX)\notag\\
&-X(\ln f)\,g(T_2Z, W).
\end{align}
Subtracting (\ref{4.8}) from (\ref{4.7}), we obtain
\begin{align}
\label{4.9}
g(h(X, Z), FW)-g(h(X, W), FZ)=2X(\ln f)\,g(T_2Z, W).
\end{align}
Interchanging $Z$ by $T_2Z$ in (\ref{4.9}) and using (\ref{3.4}), we get (\ref{4.6}), which proves the lemma completely.\end{proof}

A warped product submanifold $M_1\times_fM_2$ of a Kaehler manifold $\tilde M$ is called {\it{mixed totally geodesic}} if $h(X, Z)=0$ for any $X\in TM_1$ and $Z\in TM_2$.

Now, by applying Lemma \ref{L:5}, we obtain following theorem.

\begin{theorem}\label{T:1} Let $M=M_1\times_fM_2$ be a warped product pointwise bi-slant submanifold of a Kaehler manifold $\tilde M$ such that $M_1$ and $M_2$ are pointwise slant submanifolds with slant functions $\theta_1$ and $\theta_2$, respectively of $\tilde M$. Then, if $M$ is a mixed totally geodesic warped product submanifold, then one of the two following cases occurs:
\begin{enumerate}
\item [{\rm (i)}] either $M$ is a Riemannian product submanifold of $M_1$ and $M_2$,
\item [{\rm (ii)}] or $\theta_2=\frac{\pi}{2}$, i.e., $M$ is a warped product submanifold of the from $M_1\times_fM_\perp$
\end{enumerate}
where $M_\perp$ is a totally real submanifold of $\tilde M$.
\end{theorem}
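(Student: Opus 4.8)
The plan is to extract the conclusion directly from Lemma~\ref{L:5}. Since $M$ is assumed mixed totally geodesic, we have $h(X,Z)=0$ for all $X\in TM_1$ and $Z\in TM_2$; in particular both terms $g(h(X,W),FT_2Z)$ and $g(h(X,T_2Z),FW)$ on the left-hand side of \eqref{4.6} vanish. Hence the right-hand side must vanish as well, giving
\[
(\cos^2\theta_2)\,X(\ln f)\,g(Z,W)=0
\]
for all $X\in TM_1$ and all $Z,W\in TM_2$. Choosing $Z=W$ to be a nonzero vector (so that $g(Z,Z)>0$) and letting $X$ range over $TM_1$, we conclude that at each point $p\in M$ either $\cos^2\theta_2(p)=0$, i.e. $\theta_2(p)=\frac{\pi}{2}$, or $X(\ln f)=0$ for every $X\in T_pM_1$.

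The next step is to promote this pointwise dichotomy to a global one by a connectedness argument. Let $U=\{p\in M:\cos^2\theta_2(p)\neq 0\}$; this is an open subset of $M$, and on $U$ we have $X(\ln f)=0$ for all $X\in TM_1$, so $f$ is locally constant along $M_1$-directions on each component of $U$. On the complementary closed set $M\setminus U$ we have $\theta_2\equiv\frac{\pi}{2}$. If $U=\emptyset$, then $\theta_2=\frac{\pi}{2}$ identically and case (ii) holds: $M_2$ becomes a totally real submanifold $M_\perp$ of $\tilde M$, since $T^2_2=-(\cos^2\theta_2)I=0$ forces $T_2=0$, i.e. $J(TM_2)\subseteq T^\perp M$. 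If instead $U=M$, then $X(\ln f)=0$ for all $X\in TM_1$ everywhere, which by the standard structure of warped products means $\ln f$ depends only on the $M_2$-coordinates; but $f$ is by definition a function on the base $M_1$, so $f$ is constant and $M$ is a Riemannian product, giving case (i). The remaining possibility is that $M$ splits into pieces of the two types; here I would invoke connectedness of $M$ (which is tacitly assumed throughout, as is standard for warped products) together with the fact that $\theta_2$ and $\ln f$ are smooth to rule out a genuine mixture, or alternatively present the conclusion as holding on each connected component.

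The only real subtlety, and the point I would be careful to state cleanly, is the passage from ``$\cos^2\theta_2\cdot X(\ln f)=0$ pointwise'' to the clean alternative in the theorem statement. One should note that $X(\ln f)=0$ for all $X\in TM_1$ at a point is exactly the condition that the differential of $\ln f$ (viewed as a function on $M_1$) vanishes there; by \eqref{4.1} this is equivalent to $\nabla_XZ=0$ for all $X\in TM_1,Z\in TM_2$, i.e. the warped product is trivial near that point. So the dichotomy is genuinely between ``locally a Riemannian product'' and ``$\theta_2=\pi/2$''. I do not expect any computational obstacle — the lemmas have already done all the differential-geometric work; the argument is purely a matter of reading off \eqref{4.6} under the mixed-totally-geodesic hypothesis and handling the connectedness bookkeeping. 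Finally, to identify case (ii) with the stated form $M_1\times_f M_\perp$, I would record explicitly that $\theta_2=\frac{\pi}{2}$ together with condition (b) of Definition~3.1 makes $M_2$ totally real in $\tilde M$, completing the proof.
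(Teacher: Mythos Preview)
Your proof is correct and follows essentially the same route as the paper: both invoke Lemma~\ref{L:5} under the mixed totally geodesic hypothesis to obtain $(\cos^2\theta_2)\,X(\ln f)\,g(Z,W)=0$ and then read off the dichotomy. Your handling of the pointwise-to-global passage via connectedness is in fact more careful than the paper's, which simply asserts the alternative without further comment.
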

\begin{proof} From Lemma \ref{L:5} and mixed totally  geodesic condition, we have
\begin{align*}
(\cos^2\theta_2)X(\ln f)\,g(Z, W)=0,
\end{align*}
which shows that either $f$ is constant on $M$ or $\cos^2\theta=0$. Hence either $M$ is a Riemannian product or $\theta_2=\frac{\pi}{2}$. This completes the proof of the theorem.
\end{proof}

\begin{remark} In Theorem \ref{T:1}, if $M$ is mixed totally geodesic and $f$ is not constant on $M$, then $M$ is a warped product pointwise hemi-slant submanifold of the form $M_\theta\times_fM_\perp$, where $M_\theta$ is a pointwise slant submanifold and $M_\perp$ is a totally real submanifold of $\tilde M$. These kinds of warped product are special case of warped product hemi-slant submanifolds which have been discussed in \cite{S2}, therefore we are not interested to study mixed geodesic case.
\end{remark}

Now,  we have the following useful result.

\begin{theorem}\label{T:2} Let $M=M_1\times_fM_2$ be a warped product pointwise bi-slant submanifold of a Kaehler manifold $\tilde M$ such that $M_1$ and $M_2$ are proper pointwise slant submanifolds with slant functions $\theta_1$ and $\theta_2$, respectively of $\tilde M$. Then
\begin{align}
\label{4.10}
X(\ln f)=(\tan\theta_2)X(\theta_2)
\end{align}
for any $X\in TM_1$.
\end{theorem}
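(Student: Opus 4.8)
The plan is to exploit the two previously established formulas for $g(h(X,W),FT_2Z)-g(h(X,T_2Z),FW)$ — namely Lemma~\ref{L:3}, which gives the value $(\sin 2\theta_2)X(\theta_2)\,g(Z,W)$, and Lemma~\ref{L:5}, which gives the value $(\cos^2\theta_2)X(\ln f)\,g(Z,W)$. Since the left-hand sides are identical, subtracting the two identities yields
\begin{align*}
\big[(\cos^2\theta_2)X(\ln f)-(\sin 2\theta_2)X(\theta_2)\big]\,g(Z,W)=0
\end{align*}
for all $X\in TM_1$ and $Z,W\in TM_2$.

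Next I would choose $Z=W$ to be a nonzero vector field tangent to the fiber $M_2$, so that $g(Z,W)=g(Z,Z)>0$, and conclude that the bracketed scalar vanishes identically on $M$:
\begin{align*}
(\cos^2\theta_2)X(\ln f)=(\sin 2\theta_2)X(\theta_2).
\end{align*}
Then I would use the double-angle identity $\sin 2\theta_2=2\sin\theta_2\cos\theta_2$ on the right-hand side, obtaining $(\cos^2\theta_2)X(\ln f)=2\sin\theta_2\cos\theta_2\,X(\theta_2)$. Because $M_2$ is a \emph{proper} pointwise slant submanifold, its slant function satisfies $\theta_2\ne 0,\tfrac{\pi}{2}$, hence $\cos\theta_2\ne 0$ at every point; dividing through by $\cos^2\theta_2$ gives exactly $X(\ln f)=(\tan\theta_2)X(\theta_2)$, which is the desired relation \eqref{4.10}.

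The only delicate point — and the one I would be most careful about — is the appeal to properness: one must know that $\cos\theta_2$ does not vanish anywhere on $M$ in order to divide. This is guaranteed by the hypothesis that $M_2$ is proper pointwise slant (so $\theta_2\ne\tfrac{\pi}{2}$ pointwise), and it is worth noting explicitly that, since $g(Z,Z)$ can be taken positive at every point by picking $Z$ appropriately, the scalar identity $(\cos^2\theta_2)X(\ln f)=(\sin 2\theta_2)X(\theta_2)$ holds pointwise on all of $M$, not merely in an integrated or generic sense. Beyond this, the argument is a direct combination of Lemmas~\ref{L:3} and~\ref{L:5} together with \eqref{3.4}, with no further geometric input required; the substantive work was already done in establishing those two lemmas.
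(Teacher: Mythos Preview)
Your proof is correct and follows essentially the same route as the paper: equate the right-hand sides of Lemma~\ref{L:3} and Lemma~\ref{L:5}, use $\sin 2\theta_2 = 2\sin\theta_2\cos\theta_2$, and divide by $\cos^2\theta_2$ using properness. Your version is slightly more explicit about why $g(Z,W)$ can be taken nonzero and why the division is legitimate, but the argument is otherwise identical.
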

\begin{proof} From Lemma \ref{L:3} and Lemma \ref{L:5}, we have
\begin{align*}
\cos^2\theta_2\,X(\ln f)\,g(Z, W)=\sin2\theta_2\,X(\theta_2)\,g(Z, W),
\end{align*}
for any $X\in TM_1 $ and any $Z, W\in TM_2$. Using trigonometric identities, we find
$\{X(\ln f)-\tan\theta_2\,X(\theta_2)\}\,g(Z, W)=0$,
which implies $X(\ln f)=\tan\theta_2\,X(\theta_2)$. This proves the theorem.
\end{proof}

We have the following immediate consequences of above theorem:

\vskip.1in

\noindent 1. If $\theta_1=0$ and $\theta_2=\theta\ne \frac{\pi}{2}$ is a constant, then warped product is of the form $M_T\times_fM_\theta$, which is a semi-slant warped product submanifold. In this case it follows from Theorem \ref{T:2} that $X(\ln f)=0$. Thus $f$ is constant. Consequently, Theorem 3.2 of \cite{S1} is an special case of Theorem \ref{T:2}.
\vskip.05in

\noindent 2. In a pointwise bi-slant submanifold $M_1\times_fM_2$, if $\theta_2=0$, then the warped product is of the form $M_\theta\times_fM_T$, where $M_T$ is a complex submanifold and $M_\theta$ is a pointwise slant submanifold with slant function $\theta$. In this case, it also follows from Theorem \ref{T:2} that $f$ is constant. Thus Theorem \ref{T:2} is also a generalization of Theorem 4.1 in \cite{S3}.
\vskip.05in

\noindent 3. If $\theta_1=\frac{\pi}{2}$ and $\theta_2$ is a constant $\theta$, then the warped product pointwise bi-slant is of the from $M_\perp\times_fM_\theta$, which is a hemi-slant warped product. Such submanifolds were discussed in \cite{S2}. In this case  Theorem \ref{T:2} also implies that $f$ is constant. Thus Theorem 4.2 of \cite{S2} is a special case of Theorem \ref{T:2} as well.
\vskip.05in

\noindent4. Again, if $\theta_1=\frac{\pi}{2}$ and $\theta_2=0$, then the warped product pointwise bi-slant submanifold becomes a warped product CR-submanifold $M_\perp\times_fM_T$, and in this case we know from Theorem \ref{T:2} that $f$ is constant. Thus Theorem \ref{T:2} is also a generalization of Theorem 3.1 in \cite{C3}.
\vskip.05in

\noindent 5. If $\theta_1$ and $\theta_2$ are constant, then the warped product $M=M_1\times_fM_2$ is a warped product bi-slant submanifold and in this case Theorem \ref{T:2} also implies that $f$ is constant. Thus Theorem 5.1 of \cite{U3} is also a special case of Theorem \ref{T:2}.
\begin{remark} It is clear from Theorem \ref{T:2} that there exist no  warped product pointwise bi-slant submanifolds of the forms $M_1\times_fM_T$ or $M_1\times_fM_\theta$, where $M_1$ is a pointwise slant submanifold and $M_T$ and $M_\theta$ are complex and proper slant submanifolds of $\tilde M$, respectively.
\end{remark}

We also need the next  lemma.

\begin{lemma}\label{L:6} Let $M=M_1\times_fM_2$ be a warped product pointwise bi-slant submanifold of a Kaehler manifold $\tilde M$ such that $M_1$ and $M_2$ are proper pointwise slant submanifolds with distinct slant functions $\theta_1$ and $\theta_2$, respectively of $\tilde M$. Then we have
\vskip.1in

\noindent {\rm (i)} \hskip.9in $g(h(X, Y), FZ)=g(h(X, Z), FY),$ 
\begin{align}\notag& {\rm (ii)} \hskip.5in 
g(A_{FT_1X}W-A_{FX}T_2W, Z)+g(A_{FT_2W}X-A_{FW}T_1X, Z)  \hskip.9in\\ \notag
&\hskip1.3in =\big(\sin^2\theta_1-\sin^2\theta_2\big)X(\ln f)g(Z, W),
\end{align}
for any $X, Y\in TM_1 $ and $Z, W\in TM_2$.
\end{lemma}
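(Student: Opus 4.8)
The plan is to prove the two identities by a computation analogous to the proof of Lemma~\ref{L:2}, specialized to the warped product setting where (\ref{4.1}) is available.

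For part (i), I would start from $g(h(X,Y),FZ) = g(\tilde\nabla_X Y, FZ)$ for $X,Y\in TM_1$ and $Z\in TM_2$, then use $FZ = JZ - T_2Z$ and rewrite $g(\tilde\nabla_X Y, JZ) = -g(\tilde\nabla_X JY, Z)$ via the Kaehler condition (\ref{2.3}) and (\ref{2.1}). Expanding $JY = T_1 Y + FY$ and applying the Gauss and Weingarten formulas (\ref{2.4})--(\ref{2.5}) together with $T_1^2 Y = -(\cos^2\theta_1)Y$ from (\ref{3.4}) and the relations (\ref{2.12}), the terms involving $X(\theta_1)g(Y,Z)$ drop out because $Y$ and $Z$ are orthogonal, and the term $\cos^2\theta_1\, g(\tilde\nabla_X Y, FZ)$ can be moved to the left side. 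The remaining terms reorganize, using the symmetry of the shape operator (\ref{2.6}) and (\ref{4.1}), into the stated symmetric relation $g(h(X,Y),FZ)=g(h(X,Z),FY)$. In fact, I expect (i) to follow more quickly by combining Lemma~\ref{L:5} with Lemma~\ref{L:3} and Theorem~\ref{T:2}: Lemma~\ref{L:5} and Lemma~\ref{L:3} together already give that $g(h(X,W),FT_2Z)-g(h(X,T_2Z),FW)$ equals both $(\cos^2\theta_2)X(\ln f)g(Z,W)$ and $(\sin2\theta_2)X(\theta_2)g(Z,W)$, and similarly one can extract a symmetric expression; alternatively a direct computation like the one sketched above yields (i) cleanly.

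For part (ii), the natural route is to take the expression on the left, recognize it as the right-hand side appearing in Lemma~\ref{L:2}(i) (with the roles of the distributions being $TM_1$ and $TM_2$), and invoke (\ref{3.5}) to rewrite it as $(\sin^2\theta_1 - \sin^2\theta_2)\,g(\nabla_X Y, Z)$ — but here one must be careful, since in the warped product we instead want $g(\nabla_X Z, W)$-type terms. So I would redo the computation of Lemma~\ref{L:2}-style directly: compute $g(\tilde\nabla_X W, Z)$ for $X\in TM_1$, $Z,W\in TM_2$ in two ways. On one hand $g(\tilde\nabla_X W, Z) = X(\ln f)g(W,Z)$ by (\ref{4.1}). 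On the other hand $g(\tilde\nabla_X W, Z) = g(\tilde\nabla_X JW, JZ)$, and expanding $JW = T_2 W + FW$, $JZ = T_2 Z + FZ$ and pushing through (\ref{2.4})--(\ref{2.6}), (\ref{3.4}), (\ref{2.12}) produces, after collecting the shape-operator terms, precisely the combination $g(A_{FT_1X}W - A_{FX}T_2W, Z) + g(A_{FT_2W}X - A_{FW}T_1X, Z)$ on one side and $(\sin^2\theta_1 - \sin^2\theta_2)X(\ln f)g(Z,W)$ (coming from the $X(\ln f)$ term together with $\cos^2\theta$ and $\sin^2\theta$ contributions) on the other. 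This gives (ii).

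The main obstacle I anticipate is bookkeeping: in the two-ways computation for (ii) there will be several derivative-of-angle terms of the form $X(\theta_i)g(Z,W)$ and mixed terms $T_1 X(\ln f)g(Z,W)$ that must be shown to cancel or to combine correctly. The orthogonality of $\mathcal{D}_1$ and $\mathcal{D}_2$ kills the terms of the form $(\text{scalar})g(Y,Z)$ with $Y\in\mathcal{D}_1$, $Z\in\mathcal{D}_2$, but one has to track carefully which Gauss/Weingarten expansion produces a $\nabla^\perp$ term versus a shape-operator term, and to use (\ref{2.12}) at exactly the right place to convert $BFW$ and $CFW$. I would organize the calculation so that all $\tilde\nabla_X(\cdot)$ with a tangential output are immediately rewritten via (\ref{4.1}), leaving only shape operators and normal connection terms; the normal-connection terms then pair off by orthogonality, and what survives is the claimed identity. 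Part (i) should then either be a byproduct of this same computation or a one-line consequence of Lemmas~\ref{L:3} and \ref{L:5} as noted above.
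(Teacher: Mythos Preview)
Your sketch for part (i) is fine and matches the paper's brief indication (Gauss--Weingarten, (\ref{4.1}), and orthogonality of the distributions give it in two lines).

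For part (ii), however, there is a genuine gap in the direct computation you describe. If you compute $g(\tilde\nabla_X W,Z)$ with $X\in TM_1$, $Z,W\in TM_2$ and expand $JW=T_2W+FW$, $JZ=T_2Z+FZ$, you never apply $J$ to anything in $TM_1$; consequently no $T_1X$ and no $\theta_1$ can appear. Carrying out your expansion (using (\ref{2.12}) for $JFW$) one finds that the $X(\ln f)$ contributions cancel and the surviving identity is precisely (\ref{4.2}) of Lemma~\ref{L:3}, not the claimed relation. The terms $A_{FT_1X}W$ and $A_{FW}T_1X$ in the target simply cannot be generated this way.

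The fix is exactly the route you raised and then set aside: the left-hand side of (ii) \emph{is} the right-hand side of Lemma~\ref{L:2}(ii) (not (i)), so (\ref{3.6}) together with $g(\nabla_ZW,X)=-X(\ln f)\,g(Z,W)$ from (\ref{4.1}) gives (ii) immediately. The paper instead redoes the computation of Lemma~\ref{L:2}(ii) in the warped-product setting, differentiating along $Z\in TM_2$ and expanding $JX=T_1X+FX$; this is what introduces $T_1X$ and the $\sin^2\theta_1$ factor. Either of these works, but your proposed expansion of $JW$ and $JZ$ along $X$ does not.
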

\begin{proof} Part (i) is trivial and it can be obtained by using Gauss-Weingarten formulas, relation (\ref{4.1}) and orthogonality of vector fields. For (ii), we have
\begin{align}
\label{4.11}
g(\tilde\nabla_ZX, W)=g(\nabla_ZX, W)=X(\ln f)\,g(Z, W)
\end{align}
for any $X, Y\in TM_1$ and $Z, W\in TM_2$. On the other hand, we have
\begin{align*}
g(\tilde\nabla_ZX, W)=g(J\tilde\nabla_ZX, JW)=g(\tilde\nabla_ZJX, JW).
\end{align*}
Then from (\ref{2.7}), we get
\begin{align*}
g(\tilde\nabla_ZX, W)=g(\tilde\nabla_ZT_1X, JW)+g(\tilde\nabla_ZFX, T_2W)+g(\tilde\nabla_ZFX, FW).
\end{align*}
Using (\ref{2.1}), (\ref{2.3}), (\ref{2.5}) and covariant derivative property of the metric connection, we get
\begin{align*}
g(\tilde\nabla_ZX, W)=-g(\tilde\nabla_ZJT_1X, W)-g(A_{FX}Z, T_2W)-g(\tilde\nabla_ZFW, FX).
\end{align*}
From (\ref{2.7}) and the symmetry of the shape operator, we derive
\begin{align*}
g(\tilde\nabla_ZX, W)&=-g(\tilde\nabla_ZT^2_1X, W)-g(\tilde\nabla_ZFT_1X, W)-g(A_{FX}T_2W, Z)\\
&+g(J\tilde\nabla_ZFW, X)+g(\tilde\nabla_ZFW, T_1X)\\
&=\cos^2\theta_1\,g(\tilde\nabla_ZX, W)-\sin2\theta_1\,Z(\theta_1)\,g(X, W)+g(A_{FT_1X}Z, W)\\
&-g(A_{FX}T_2W, Z)+g(\tilde\nabla_ZJFW, X)-g(A_{FW}Z, T_1X).
\end{align*}
Using (\ref{2.4}), (\ref{2.7}), (\ref{4.1}), (\ref{4.11}) and the orthogonality of vector fields and symmetry of the shape operator, we obtain
\begin{align*}
\sin^2\theta_1\,&X(\ln f)\,g(Z, W)=g(A_{FT_1X}W-A_{FX}T_2W, Z)\\
&+g(\tilde\nabla_ZBFW, X)+g(\tilde\nabla_ZCFW, X)-g(A_{FW}T_1X, Z).
\end{align*}
Using (\ref{2.12}), we arrive at
\begin{align*}
\sin^2\theta_1\,&X(\ln f)\,g(Z, W)=g(A_{FT_1X}W-A_{FX}T_2W, Z)-\sin^2\theta_2\,g(\tilde\nabla_ZW, X)\\
&-\sin2\theta_2\,Z(\theta_2)\,g(X, W)-g(\tilde\nabla_ZFT_2W, X)-g(A_{FW}T_1X, Z).
\end{align*}
From the orthogonality of vector fields and the relations (\ref{2.4}), (\ref{2.5}) and (\ref{4.1}), we find that
\begin{align*}
\sin^2\theta_1\,&X(\ln f)\,g(Z, W)=g(A_{FT_1X}W-A_{FX}T_2W, Z)\\&+\sin^2\theta_2\,X(\ln f)\,g(Z, W)
+g(A_{FT_2W}Z, X)-g(A_{FW}T_1X, Z).
\end{align*}
Again, using the symmetry of the shape operator, we get (ii) from the above relation. Hence the lemma is proved completely.\end{proof}

A foliation $L$ on a Riemannian manifold $M$ is called  {\it totally umbilical}, if every leaf  of $L$ is totally umbilical in $M$. If, in addition, the mean curvature vector of every leaf is parallel in the normal bundle,  then $L$ is called  a {\it  spheric foliation}. 
  If every leaf of $L$ is totally geodesic, then $L$ is called a {\it totally geodesic foliation} (cf. [11, 14, 17]).

We need the following well known result of S. Hiepko \cite{Hi}.
\vskip.1in

\noindent{\bf{Hiepko's Theorem.}} {\it{Let ${\mathcal{D}}_1$ and ${\mathcal{D}}_2$ be two orthogonal distribution on a Riemannian manifold $M$. Suppose that ${\mathcal{D}}_1$ and ${\mathcal{D}}_2$ both are involutive such that ${\mathcal{D}}_1$ is a totally geodesic foliation and ${\mathcal{D}}_2$ is a spherical foliation. Then $M$ is locally isometric to a non-trivial warped product $M_1\times_fM_2$, where $M_1$ and $M_2$ are integral manifolds of ${\mathcal{D}}_1$ and ${\mathcal{D}}_2$ , respectively.}}
\vskip.1in

The following result provides a characterization of warped product pointwise bi-slant submanifolds of a Kaehler manifold.

\begin{theorem}\label{T:3} Let $M$ be a proper pointwise bi-slant submanifold of a Kaehler manifold $\tilde M$ with pointwise slant distributions ${\mathcal{D}}_1$ and ${\mathcal{D}}_2$. Then $M$ is locally a warped product submanifold of the form $M_1\times_fM_2$, where $M_1$ and $M_2$ are pointwise slant submanifolds with distinct slant functions $\theta_1$ and $\theta_2$, respectively of $\tilde M$ if and only if the shape operator of $M$ satisfies
\begin{align}
\label{4.12} A_{FT_1X}Z -A_{FX}T_2Z+A_{FT_2Z}X-A_{FZ}T_1X=\left(\sin^2\theta_1-\sin^2\theta_2\right)X(\mu)\,Z
\end{align}
for $X\in {\mathcal{D}}_1$, $Z\in {\mathcal{D}}_2,$ and for a function $\mu$ on $M$ satisfying $W\mu=0$ for any $W\in {\mathcal{D}}_2$.
\end{theorem}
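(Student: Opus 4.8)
The plan is to establish the equivalence by proving each implication separately. Throughout, abbreviate the left-hand side of \eqref{4.12} by $L(X,Z):=A_{FT_1X}Z-A_{FX}T_2Z+A_{FT_2Z}X-A_{FZ}T_1X$, and note that its right-hand side lies in $\mathcal{D}_2$; hence \eqref{4.12} is equivalent to the pair of scalar identities $g(L(X,Z),Y)=0$ for all $Y\in\mathcal{D}_1$ and $g(L(X,Z),W)=(\sin^2\theta_1-\sin^2\theta_2)X(\mu)g(Z,W)$ for all $W\in\mathcal{D}_2$. Since $\theta_1\ne\theta_2$ and Wirtinger angles take values in $[0,\frac{\pi}{2}]$, the function $\sin^2\theta_1-\sin^2\theta_2$ is nowhere zero, so dividing by it is legitimate.

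For the ``only if'' part, suppose $M=M_1\times_fM_2$, so that $\mathcal{D}_i=TM_i$, and set $\mu=\ln f$; then $W\mu=0$ for every $W\in\mathcal{D}_2$ because $f$ lives on $M_1$. Evaluating $g(L(X,W),Z)$ reproduces verbatim the left-hand side of the identity in Lemma~\ref{L:6}(ii), so $g(L(X,W),Z)=(\sin^2\theta_1-\sin^2\theta_2)X(\mu)g(W,Z)$; relabelling $Z\leftrightarrow W$ gives the $\mathcal{D}_2$-component of \eqref{4.12}. For the $\mathcal{D}_1$-component, rewrite the four terms of $g(L(X,Z),Y)$ as $g(h(\cdot,\cdot),\cdot)$ using \eqref{2.6}; after symmetrising $h$, Lemma~\ref{L:6}(i) converts them into two cancelling pairs, so $g(L(X,Z),Y)=0$. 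Hence \eqref{4.12} holds with $\mu=\ln f$.

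For the ``if'' part, assume \eqref{4.12} for such a $\mu$ and verify the hypotheses of Hiepko's Theorem. First, the right-hand side of \eqref{3.5} coincides, up to reordering, with $g(L(Y,Z),X)$, which by \eqref{4.12} equals $(\sin^2\theta_1-\sin^2\theta_2)Y(\mu)g(Z,X)=0$; therefore $g(\nabla_XY,Z)=0$ for $X,Y\in\mathcal{D}_1$ and $Z\in\mathcal{D}_2$, so $\mathcal{D}_1$ is an integrable totally geodesic foliation. Second, the right-hand side of \eqref{3.6} is $g(L(X,W),Z)=(\sin^2\theta_1-\sin^2\theta_2)X(\mu)g(Z,W)$, which yields $g(\nabla_ZW,X)=-X(\mu)g(Z,W)$. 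Since $W\mu=0$ on $\mathcal{D}_2$, the field $\mathrm{grad}\,\mu$ is $\mathcal{D}_1$-valued, so $\nabla_ZW$ has $\mathcal{D}_1$-component $-g(Z,W)\,\mathrm{grad}\,\mu$; thus $\mathcal{D}_2$ is integrable and its leaves are totally umbilical in $M$ with mean curvature vector $H=-\mathrm{grad}\,\mu$. Finally, to see that $H$ is parallel in the normal bundle of the leaves of $\mathcal{D}_2$, use the symmetry of $\mathrm{Hess}\,\mu$: for $X\in\mathcal{D}_1$, $Z\in\mathcal{D}_2$ one has $g(\nabla_Z\,\mathrm{grad}\,\mu,X)=\mathrm{Hess}\,\mu(X,Z)=X(Z\mu)-g(\mathrm{grad}\,\mu,\nabla_XZ)=0$, since $Z\mu=0$ and, by the first step, $\nabla_XZ\in\mathcal{D}_2$ whereas $\mathrm{grad}\,\mu\in\mathcal{D}_1$. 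Hence $\mathcal{D}_2$ is a spherical foliation, Hiepko's Theorem applies, and $M$ is locally isometric to a warped product $M_1\times_fM_2$ with $M_i$ tangent to $\mathcal{D}_i$; comparing $g(\nabla_ZW,X)$ computed from the warped metric via \eqref{4.1} with the formula above forces $\ln f-\mu$ to be constant on $M_1$. Since $\mathcal{D}_1,\mathcal{D}_2$ are pointwise slant with distinct slant functions $\theta_1,\theta_2$, the factors $M_1,M_2$ are pointwise slant submanifolds of $\tilde M$ with those slant functions.

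Most of this is index bookkeeping: the $\mathcal{D}_2$-components drop out by directly matching \eqref{4.12} against \eqref{3.6} and Lemma~\ref{L:6}(ii), and the vanishing of the $\mathcal{D}_1$-component of $L(X,Z)$ is the one spot where Lemma~\ref{L:6}(i) is essential. The genuinely non-formal step — and the one I expect to be the main obstacle — is the parallelism of $H$, which does not follow from the earlier lemmas but requires combining the symmetry of the Hessian with the two structural facts $\mathrm{grad}\,\mu\in\mathcal{D}_1$ and $\nabla_XZ\in\mathcal{D}_2$ just obtained.
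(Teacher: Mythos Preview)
Your proof is correct and follows essentially the same route as the paper: in the forward direction you split \eqref{4.12} into its $\mathcal{D}_1$- and $\mathcal{D}_2$-components and handle them with Lemma~\ref{L:6}(i) and (ii) respectively, and in the converse you feed \eqref{4.12} into the two parts of Lemma~\ref{L:2} to verify the hypotheses of Hiepko's Theorem. The only substantive difference is that the paper dismisses the parallelism of $H=-\vec\nabla\mu$ with ``it is easy to see,'' whereas you supply the Hessian argument (using $\mathrm{grad}\,\mu\in\mathcal{D}_1$ and $\nabla_XZ\in\mathcal{D}_2$); your version is the more complete one here.
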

\begin{proof} Let $M=M_1\times_fM_2$ be a pointwise bi-slant submanifold of a Kaehler manifold $\tilde M$. Then from Lemma \ref{L:6}(i), we have
\begin{align}\label{4.13} g(A_{FY}Z-A_{FZ}Y, X)=0
\end{align}
for any $X, Y\in TM_1$ and $Z\in TM_2$.
Interchanging $Y$ by $T_1Y$ in (\ref{4.13}), we get
\begin{align}
\label{4.14}
g(A_{FT_1Y}Z-A_{FZ}T_1Y, X)=0.
\end{align}
Again interchanging $Z$ by $T_2Z$ in (\ref{4.13}), we obtain
\begin{align}
\label{4.15}
g(A_{FY}T_2Z-A_{FT_2Z}Y, X)=0.
\end{align}
Subtracting (\ref{4.15}) from (\ref{4.14}), we derive
\begin{align}
\label{4.16}
g(A_{FT_1Y}Z-A_{FZ}T_1Y+A_{FT_2Z}Y-A_{FY}T_2Z, X)=0.
\end{align}
Then (\ref{4.12}) follows from Lemma \ref{L:6}(ii) by using the above fact.

Conversely, if $M$ is a pointwise bi-slant submanifold with pointwise slant distributions ${\mathcal{D}}_2$ and ${\mathcal{D}}_2$ such that (\ref{4.12}) holds, then from Lemma 
\ref{L:2}(i), we have
\begin{align*} (\sin^2\theta_1-\sin^2\theta_2)
g(\nabla_XY, Z)&=g(A_{FT_1Y}Z-A_{FY}T_2Z+A_{FT_2Z}Y-A_{FZ}T_1Y, X)
\end{align*}
for any $X, Y\in {\mathcal{D}}_1$ and $Z\in {\mathcal{D}}_2$. Using the given condition (\ref{4.12}), we get
\begin{align*}
g(\nabla_XY, Z)&=X(\mu)\,g(X, Z)=0,
\end{align*}
which shows that the leaves of the distributions are totally geodesic in $M$. On the other hand, from Lemma \ref{L:2}(ii) we have
\begin{align*}
\notag (\sin^2\theta_2-\sin^2\theta_1)g(\nabla_ZW, X)&=g(A_{FT_2W}X-A_{FW}T_1X\\
&+A_{FT_1X}W-A_{FX}T_2W, Z).
\end{align*}
From the hypothesis of the theorem i.e., (\ref{4.12}), we arrive at
\begin{align}
\label{4.17}
g(\nabla_ZW, X)=-X(\mu)\,g(Z, W).
\end{align}
By polarization, we obtain
\begin{align}
\label{4.18}
g(\nabla_WZ, X)=-X(\mu)\,g(Z, W).
\end{align}
Subtracting (\ref{4.18}) from (\ref{4.17}) and using the definition of Lie bracket, we derive
$ g([Z, W], X)=0$,
which shows that the distribution ${\mathcal{D}}_2$ is integrable. If we consider a leaf $M_2$ of ${\mathcal{D}}_2$ and the second fundamental form $h_2$ of $M_2$ in $M$, then from (\ref{4.17}), we have
\begin{align*}
\notag g(h_2(Z, W), X)=g(\nabla_ZW, X)=-X(\mu)\,g(Z, W).
\end{align*}
Using the definition of the gradient we get
$h_2(Z, W)=-\vec\nabla\mu\,g(Z, W),$
where $\vec\nabla\mu$ is the gradient of  $\mu$. The above relations shows that the leaf $M_2$ is totally umbilical in $M$ with mean curvature vector $H_2=-\vec\nabla\mu$. Since $W(\mu)=0$ for any $W\in {\mathcal{D}}_2$, it is easy to see that the mean curvature is parallel. Hence the spherical condition is satisfied. Then, by Hiepko's Theorem $M$ is locally a warped product submanifold. Hence the proof is complete.
\end{proof}

We have the following consequences of the above theorem:
\vskip.05in

\noindent 1. In Theorem \ref{T:3}, if $\theta_1=0$ and $\theta_2=\frac{\pi}{2}$, then all terms in the left hand side of (\ref{4.12}) vanish identically except the last term, thus the relation (\ref{4.12}) is valid for $CR$-warped product and it will be $$A_{JZ}JX=-X(\mu)\,Z,\,\,\,\forall\,X\in {\mathcal{D}} ,\,\,Z\in {\mathcal{D}}^\perp,$$ where ${\mathcal{D}}$ and ${\mathcal{D}}^\perp$ are complex and totally real distributions of $M$, respectively. Interchanging $X$ by $JX$, then we get the relation (4.4) of Theorem 4.2 in \cite{C3}.
\vskip.1in

\noindent 2. Also, if $\theta_1=0$ and $\theta_2=\theta$, a slant function, then the submanifold $M$ becomes pointwise semi-slant which has been studied in \cite{S3}. In this case, the first two terms in the left hand side of (\ref{4.12}) vanish identically. Thus, the relation (\ref{4.12}) is true for pointwise semi-slant warped product and it will be $$A_{FTZ}X-A_{FZ}JX=-\left(\sin^2\theta\right)X(\mu)\,Z,\,\,\,X\in {\mathcal{D}},\,\,\,Z\in {\mathcal{D}}_\theta,$$ where ${\mathcal{D}}$ and ${\mathcal{D}}_\theta$ are complex and proper pointwise slant distributions of $M$. Hence, Theorem 5.1 of \cite{S3} is a special case of Theorem  \ref{T:3}. In fact, in the relation (5.4) of Theorem 5.1 in \cite{S3}, the term $(1+\cos^2\theta)$ should be $(1-\cos^2\theta)$, i.e., there is a missing term.
\vskip.1in

\noindent 3. If we consider $\theta_1=\theta$, a constant slant angle and $\theta_2=\frac{\pi}{2}$, then it is a case of hemi-slant warped products which have been discussed in \cite{S2}. In this case, the second and third terms in the left hand side of (\ref{4.12}) vanish identically. Hence, (\ref{4.12}) is valid for hemi-slant warped products. Thus, the Theorem  \ref{T:3} is also a generalization of Theorem 5.1 in \cite{S2}. In this case the relation (\ref{4.12}) will be $$A_{FTX}Z-A_{JZ}TX=-(\cos^2\theta) X(\mu)\,Z,\,\,\,X\in {\mathcal{D}}_\theta,\,\,\,Z\in {\mathcal{D}}^\perp,$$where ${\mathcal{D}}_\theta$ and ${\mathcal{D}}^\perp$ are proper slant and totally real distributions. Hence Theorem 5.1 of \cite{S2} can be proved without using the mixed totally geodesic condition.
\vskip.1in

\noindent 4. In Theorem  \ref{T:3} if we assume $\theta_1=\frac{\pi}{2}$ and $\theta_2=\theta$  a pointwise slant function,  then this is the case of pointwise hemi-slant warped products  studied in \cite {U4}. In this case  (\ref{4.12}) reduces to the form $$A_{FTZ}X-A_{JX}TZ=(\cos^2\theta)X(\mu)\,Z,\,\,\,X\in {\mathcal{D}}^\perp,\,\,\,Z\in {\mathcal{D}}_\theta,$$where ${\mathcal{D}}^\perp$ and ${\mathcal{D}}_\theta$ are totally real and proper pointwise slant distributions of a pointwise hemi-slant submanifold $M$ in a Kaehler manifold $\tilde M$, which a condition of Theorem 4.2 in \cite{U4}. Therefore Theorem  \ref{T:3} is also a generalized version of Theorem 4.2 in \cite{U4}.

\begin{remark} The inequality for the squared norm of the second fundamental form of a warped product pointwise bi-slant submanifold can be evaluated by using only the mixed totally geodesic condition. And, if the warped product is mixed totally geodesic, then by Theorem \ref{T:1}, either it is a Riemannian product or a warped product pointwise hemi-slant submanifold of the form $M_\theta\times_fM_\perp$, where $M_\theta$ is a proper pointwise slant submanifold and $M_\perp$ is a totally real submanifold of a Kaehler manifold $\tilde M$. These kinds of warped products are special case of hemi-slant warped products which have been considered in \cite{S2} and the inequality is obtained by using the mixed totally geodesic condition.
\end{remark}

\end{document}